\providecommand{\U}[1]{\protect\rule{.1in}{.1in}}
\newtheorem{theorem}{Theorem}
\newtheorem{corollary}[theorem]{Corollary}
\newtheorem{proposition}[theorem]{Proposition}
\newenvironment{proof}[1][Proof]{\noindent\textbf{#1.} }{\ \rule{0.5em}{0.5em}}
\begin{document}

\date{}
\title{A closed formula for the generating function of $p$-Bernoulli numbers}
\author{Levent Karg\i n$^{1}$ and Mourad Rahmani$^{2}$\\$^{1}$Akseki Vocational School, Alanya Alaaddin Keykubat University,\\Antalya TR-07630, Turkey\\$^{2}$USTHB, Faculty of Mathematics, P. O. Box32, El Alia, \\Bab Ezzouar, 16111, Algiers, Algeria\\leventkargin48@gmail.com and mrahmani@usthb.dz}
\maketitle

\begin{abstract}
In this paper, using geometric polynomials, we obtain a generating function of
$p$-Bernoulli numbers. As a consequences this generating function, we derive
closed formulas for the finite summation of Bernoulli and harmonic numbers
involving Stirling numbers of the second kind.

\textbf{2000 Mathematics Subject Classification: }11B68, 11B83

\textbf{Key words: }$p$-Bernoulli number, geometric polynomial, harmonic number.

\end{abstract}

\section{Introduction}

Rahmani \cite{Rahmani} introduced $p$-Bernoulli numbers by constructing an
infinite matrix as follows:

The first row of the matrix $B_{0,p}=1$ and each entry $B_{n,p}$ is given
recursively by
\[
B_{n+1,p}=pB_{n,p}-\frac{\left(  p+1\right)  ^{2}}{p+2}B_{n,p+1}.
\]
The first column of the matrix $B_{n,0}=B_{n}$. Here, $B_{n}$ is the $n$th
Bernoulli number.

For every integer $p\geq-1$, these numbers have an explicit formula
\begin{equation}
B_{n,p}=\sum_{k=0}^{n}\left(  -1\right)  ^{k}%
%TCIMACRO{\QATOPD{\{}{\}}{n}{k}}%
%BeginExpansion
\genfrac{\{}{\}}{0pt}{}{n}{k}%
%EndExpansion
\binom{k+p+1}{k}^{-1}k!, \label{1}%
\end{equation}
and are closely related to Bernoulli numbers by the following formula
\begin{equation}
B_{n,p}=\frac{p+1}{p!}\sum_{j=0}^{p}\left(  -1\right)  ^{j}%
%TCIMACRO{\QATOPD{[}{]}{p}{j}}%
%BeginExpansion
\genfrac{[}{]}{0pt}{}{p}{j}%
%EndExpansion
B_{n+j}, \label{11}%
\end{equation}
where $%
%TCIMACRO{\QATOPD{[}{]}{n}{k}}%
%BeginExpansion
\genfrac{[}{]}{0pt}{}{n}{k}%
%EndExpansion
$ is the Stirling number of the first kind \cite{Graham}. The first few
generating functions for $B_{n,p}$ ($p=1,2$) are
\begin{align*}
\sum_{n=0}^{\infty}B_{n,1}\frac{t^{n}}{n!}  &  =\frac{2\left[  \left(
t-1\right)  e^{t}+1\right]  }{\left(  e^{t}-1\right)  ^{2}},\\
\sum_{n=0}^{\infty}B_{n,2}\frac{t^{n}}{n!}  &  =\frac{3\left[  \left(
2t-3\right)  e^{2t}+4e^{t}-1\right]  }{2\left(  e^{t}-1\right)  ^{3}}.
\end{align*}

The main purpose of this study is to give a close form of the above results
as
\begin{equation}
\sum_{n=0}^{\infty}B_{n,p}\frac{t^{n}}{n!}=\frac{\left(  p+1\right)  \left(
t-H_{p}\right)  e^{pt}}{\left(  e^{t}-1\right)  ^{p+1}}+\left(  p+1\right)
\sum_{k=1}^{p}\binom{p}{k}\frac{H_{k}}{\left(  e^{t}-1\right)  ^{k+1}},
\label{4}%
\end{equation}
where $H_{n}$ is the harmonic numbers, defined by \cite[p. 258]{Graham}%
\[
H_{n}=\sum_{j=1}^{n}\frac{1}{j}.
\]
As a consequences of (\ref{4}), we have closed formulas for the finite
summation of Bernoulli and harmonic numbers.

For the proof of (\ref{4}), we use some properties of geometric polynomials.
The geometric polynomials are defined by means of the following generating
function \cite{T}%
\begin{equation}
\frac{1}{1-x\left(  e^{t}-1\right)  }=\sum_{n=0}^{\infty}w_{n}\left(
x\right)  \frac{t^{n}}{n!}\text{,} \label{10}%
\end{equation}
and have the explicit formula
\begin{equation}
w_{n}\left(  x\right)  =\sum_{k=0}^{n}%
%TCIMACRO{\QATOPD{\{}{\}}{n}{k}}%
%BeginExpansion
\genfrac{\{}{\}}{0pt}{}{n}{k}%
%EndExpansion
k!x^{k}, \label{2}%
\end{equation}
where $%
%TCIMACRO{\QATOPD{\{}{\}}{n}{k}}%
%BeginExpansion
\genfrac{\{}{\}}{0pt}{}{n}{k}%
%EndExpansion
$ is the Stirling number of the second kind \cite{Graham}. The Stirling
numbers of the second kind are defined by means of the following generating
function
\begin{equation}
\sum_{n=0}^{\infty}%
%TCIMACRO{\QATOPD{\{}{\}}{n}{k}}%
%BeginExpansion
\genfrac{\{}{\}}{0pt}{}{n}{k}%
%EndExpansion
\frac{t^{n}}{n!}=\frac{\left(  e^{t}-1\right)  ^{k}}{k!}. \label{6}%
\end{equation}

Some other properties of geometric polynomials can be found in \cite{B, B3,
B4, BD, Diletal, Kargin}.

\section{A new generating function for $p$-Bernoulli numbers}

In this section, the main theorem and its applications are given.

Now, we give the main theorem of this paper.

\begin{theorem}
\label{teo1}For $p\geq0$, the following generating function holds true:%
\begin{equation}
\sum_{n=0}^{\infty}B_{n,p}\frac{t^{n}}{n!}=\frac{\left(  p+1\right)  \left(
t-H_{p}\right)  e^{pt}}{\left(  e^{t}-1\right)  ^{p+1}}+\left(  p+1\right)
\sum_{k=1}^{p}\binom{p}{k}\frac{H_{k}}{\left(  e^{t}-1\right)  ^{k+1}}.
\label{5}%
\end{equation}

\end{theorem}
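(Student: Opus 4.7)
The plan is to use the explicit formula (\ref{1}) for $B_{n,p}$ together with the geometric polynomial generating function (\ref{10}) to obtain an integral representation for $\sum_{n\geq 0}B_{n,p}t^{n}/n!$, and then evaluate the integral in closed form. The first step will be to rewrite the reciprocal binomial coefficient as a Beta integral,
\[
\binom{k+p+1}{k}^{-1} = (p+1)\int_{0}^{1}x^{k}(1-x)^{p}\,dx.
\]
Inserting this into (\ref{1}) and using (\ref{2}) to recognise the $x$-dependent piece as $w_{n}(-x)$, one finds $B_{n,p} = (p+1)\int_{0}^{1}(1-x)^{p}w_{n}(-x)\,dx$. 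Multiplying by $t^{n}/n!$, summing over $n$, and applying (\ref{10}) with $x\mapsto -x$ yields the compact representation
\[
\sum_{n\geq 0} B_{n,p}\frac{t^{n}}{n!} = (p+1)\int_{0}^{1}\frac{(1-x)^{p}}{1+x(e^{t}-1)}\,dx.
\]

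The second step is to evaluate this integral. The substitution $y = 1 + x(e^{t}-1)$ converts it into
\[
\frac{p+1}{(e^{t}-1)^{p+1}}\int_{1}^{e^{t}}\frac{(e^{t}-y)^{p}}{y}\,dy.
\]
Expanding $(e^{t}-y)^{p}$ by the binomial theorem and integrating term by term, the $k=0$ summand contributes $t\,e^{pt}$, while each $k\geq 1$ summand contributes $\binom{p}{k}\frac{(-1)^{k}}{k}(e^{pt}-e^{(p-k)t})$. Invoking the classical identity $\sum_{k=1}^{p}\binom{p}{k}(-1)^{k-1}/k = H_{p}$ (easily obtained by integrating $(1-(1-x)^{p})/x$ on $[0,1]$) to gather the $e^{pt}$ pieces then gives
\[
\sum_{n\geq 0} B_{n,p}\frac{t^{n}}{n!} = \frac{(p+1)(t-H_{p})e^{pt}}{(e^{t}-1)^{p+1}} - \frac{p+1}{(e^{t}-1)^{p+1}}\sum_{k=1}^{p}\binom{p}{k}\frac{(-1)^{k}}{k}e^{(p-k)t}.
\]

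The final and hardest step is to show that the last sum above can be recast as $(p+1)\sum_{k=1}^{p}\binom{p}{k}H_{k}/(e^{t}-1)^{k+1}$. Setting $u = e^{t}-1$ and writing $e^{(p-k)t} = (1+u)^{p-k}$, both expressions become polynomials of degree at most $p-1$ in $u$ after multiplication by $u^{p+1}$, so it suffices to match the coefficient of $u^{i}$ on each side for $0\leq i\leq p-1$. Expanding $(1+u)^{p-k}$ binomially and applying the standard identity $\binom{p}{k}\binom{p-k}{i}=\binom{p}{i}\binom{p-i}{k}$, the coefficient of $u^{i}$ on the left becomes $-\binom{p}{i}\sum_{k=1}^{p-i}\binom{p-i}{k}(-1)^{k}/k$, which by the same harmonic identity used above (now applied with $N=p-i$) equals $\binom{p}{i}H_{p-i}$; on the right it is $\binom{p}{p-i}H_{p-i}=\binom{p}{i}H_{p-i}$. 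The main obstacle here is not depth but careful bookkeeping — arranging the index shifts so that the coefficient comparison reduces to the very same harmonic identity already in hand.
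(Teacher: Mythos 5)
Your proposal is correct, and it takes a genuinely different route from the paper. The paper proves an iterated-integral representation
\[
\frac{(-1)^{p}}{(p+1)!}B_{n,p}=\int_{-1}^{0}\int_{0}^{x_{p}}\cdots\int_{0}^{x_{2}}w_{n}(x_{1})\,dx_{1}\cdots dx_{p},
\]
sums the generating function (\ref{10}) under the integral sign, and then evaluates the resulting $(p+1)$-fold integral of $1/(1-x_{1}(e^{t}-1))$ by induction on $p$, carrying the harmonic numbers through an explicit induction hypothesis (\ref{12}). You instead collapse the reciprocal binomial in (\ref{1}) into a single Beta integral, which packages the whole $(p+1)$-fold integration into the one-line representation $(p+1)\int_{0}^{1}(1-x)^{p}\,dx/(1+x(e^{t}-1))$; the substitution $y=1+x(e^{t}-1)$ and a binomial expansion then give a closed form directly, with no induction. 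The price you pay is the final rearrangement converting the expansion in powers of $e^{t}$ into the paper's expansion in powers of $(e^{t}-1)^{-1}$, but your coefficient comparison is sound: the identity $\binom{p}{k}\binom{p-k}{i}=\binom{p}{i}\binom{p-i}{k}$ and the classical evaluation $\sum_{k=1}^{N}\binom{N}{k}(-1)^{k-1}/k=H_{N}$ (applied once with $N=p$ and once with $N=p-i$) do exactly what you claim. I verified the key computations: the Beta identity $(p+1)\int_{0}^{1}x^{k}(1-x)^{p}dx=\binom{k+p+1}{k}^{-1}$, the contribution $t e^{pt}$ from $k=0$ and $\binom{p}{k}\frac{(-1)^{k}}{k}(e^{pt}-e^{(p-k)t})$ from $k\geq1$, and the matching of the coefficient of $u^{i}$ on both sides as $\binom{p}{i}H_{p-i}$. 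Your argument is arguably cleaner than the paper's and makes transparent why harmonic numbers appear, whereas the paper's induction generates them step by step; it also exhibits an intermediate closed form (the sum over $e^{(p-k)t}$) that the paper never writes down.
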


For the proof of main theorem, we first need the following proposition.

\begin{proposition}
For $n>p\geq0$, we have
\begin{equation}
\frac{\left(  -1\right)  ^{p}}{\left(  p+1\right)  !}B_{n,p}%
=\underset{p+1\text{ times}}{\underbrace{%
%TCIMACRO{\dint \limits_{-1}^{0}}%
%BeginExpansion
{\displaystyle\int\limits_{-1}^{0}}
%EndExpansion%
%TCIMACRO{\dint \limits_{0}^{x_{p}}}%
%BeginExpansion
{\displaystyle\int\limits_{0}^{x_{p}}}
%EndExpansion
\ldots%
%TCIMACRO{\dint \limits_{0}^{x_{3}}}%
%BeginExpansion
{\displaystyle\int\limits_{0}^{x_{3}}}
%EndExpansion%
%TCIMACRO{\dint \limits_{0}^{x_{2}}}%
%BeginExpansion
{\displaystyle\int\limits_{0}^{x_{2}}}
%EndExpansion
}}w_{n}\left(  x_{1}\right)  dx_{1}dx_{2}\ldots dx_{p-1}dx_{p}. \label{3}%
\end{equation}

\end{proposition}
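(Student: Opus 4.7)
The plan is to substitute the explicit formula (\ref{2}) for $w_{n}(x_{1})$ into the iterated integral and evaluate it monomial by monomial. Because $w_{n}(x_{1})=\sum_{k=0}^{n}\genfrac{\{}{\}}{0pt}{}{n}{k}k!\,x_{1}^{k}$ is polynomial in $x_{1}$, linearity reduces the claim to computing the iterated integral applied to the single monomial $x_{1}^{k}$ and then summing the result against the weights $\genfrac{\{}{\}}{0pt}{}{n}{k}k!$.

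Working from the innermost limit outward, each successive integration of the form $\int_{0}^{x_{j+1}}(\cdot)\,dx_{j}$ sends $x_{j}^{m}\mapsto x_{j+1}^{m+1}/(m+1)$, so a short induction on the depth shows that after all the inner integrals with upper limits $x_{2},x_{3},\ldots$ have been carried out, the integrand presented to the outermost integral is the monomial $x^{k+p}$ divided by $(k+1)(k+2)\cdots(k+p)$. The outermost integral is the only one with a negative lower limit; it evaluates to $\int_{-1}^{0}x^{k+p}\,dx=(-1)^{k+p}/(k+p+1)$, which simultaneously supplies the alternating sign $(-1)^{k}$ demanded by formula (\ref{1}), the global factor $(-1)^{p}$, and the missing denominator factor $k+p+1$. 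Collecting everything, the iterated integral equals
\[
(-1)^{p}\sum_{k=0}^{n}(-1)^{k}\genfrac{\{}{\}}{0pt}{}{n}{k}\frac{(k!)^{2}}{(k+p+1)!}.
\]

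To finish, I would rewrite $(k!)^{2}/(k+p+1)! = k!\binom{k+p+1}{k}^{-1}/(p+1)!$, which converts the sum into $(-1)^{p}/(p+1)!$ times the right-hand side of (\ref{1}), i.e., exactly $(-1)^{p}B_{n,p}/(p+1)!$ as claimed. The main obstacle I anticipate is clerical rather than conceptual: carefully tracking signs and the off-by-one in the exponent at the outermost step, and verifying that the factor $(-1)^{k+p}$ separates cleanly into the global $(-1)^{p}$ and the intrinsic $(-1)^{k}$ of (\ref{1}). The hypothesis $n>p$ does not appear to be essential to the algebra — formula (\ref{1}) holds more generally — and I expect it simply records the regime in which the representation (\ref{3}) is invoked in the proof of Theorem \ref{teo1}.
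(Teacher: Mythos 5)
Your proposal is correct and follows essentially the same route as the paper: substitute the explicit formula (\ref{2}), integrate the monomials term by term through the inner integrals to accumulate the denominator $(k+1)\cdots(k+p)$, and let the final integral over $[-1,0]$ produce the sign $(-1)^{k+p}$ and the factor $k+p+1$, after which (\ref{1}) identifies the sum as $(-1)^{p}B_{n,p}/(p+1)!$. The only difference is that you carry out the last algebraic step explicitly where the paper simply cites (\ref{1}).
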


\begin{proof}
If we integrate both sides of (\ref{2}) with respect to $x_{1}$ from $0$ to
$x_{2}$, we have%
\[%
%TCIMACRO{\dint \limits_{0}^{x_{2}}}%
%BeginExpansion
{\displaystyle\int\limits_{0}^{x_{2}}}
%EndExpansion
w_{n}\left(  x_{1}\right)  dx_{1}=\sum_{k=0}^{n}%
%TCIMACRO{\QATOPD{\{}{\}}{n}{k}}%
%BeginExpansion
\genfrac{\{}{\}}{0pt}{}{n}{k}%
%EndExpansion
k!\frac{x_{2}^{k+1}}{k+1}.
\]
Integrating both sides of the above equation with respect to $x_{2}$ from $0$
to $x_{3}$, we obtain%
\[%
%TCIMACRO{\dint \limits_{0}^{x_{3}}}%
%BeginExpansion
{\displaystyle\int\limits_{0}^{x_{3}}}
%EndExpansion%
%TCIMACRO{\dint \limits_{0}^{x_{2}}}%
%BeginExpansion
{\displaystyle\int\limits_{0}^{x_{2}}}
%EndExpansion
w_{n}\left(  x_{1}\right)  dx_{1}dx_{2}=\sum_{k=0}^{n}%
%TCIMACRO{\QATOPD{\{}{\}}{n}{k}}%
%BeginExpansion
\genfrac{\{}{\}}{0pt}{}{n}{k}%
%EndExpansion
k!\frac{x_{3}^{k+2}}{\left(  k+1\right)  \left(  k+2\right)  }.
\]
Applying the same procedure for $p$ times yields
\[%
%TCIMACRO{\dint \limits_{0}^{x_{p}}}%
%BeginExpansion
{\displaystyle\int\limits_{0}^{x_{p}}}
%EndExpansion
\ldots%
%TCIMACRO{\dint \limits_{0}^{x_{3}}}%
%BeginExpansion
{\displaystyle\int\limits_{0}^{x_{3}}}
%EndExpansion%
%TCIMACRO{\dint \limits_{0}^{x_{2}}}%
%BeginExpansion
{\displaystyle\int\limits_{0}^{x_{2}}}
%EndExpansion
w_{n}\left(  x_{1}\right)  dx_{1}dx_{2}\ldots dx_{p-1}=\sum_{k=0}^{n}%
%TCIMACRO{\QATOPD{\{}{\}}{n}{k}}%
%BeginExpansion
\genfrac{\{}{\}}{0pt}{}{n}{k}%
%EndExpansion
k!\frac{x_{p}^{k+p}}{\left(  k+1\right)  \cdots\left(  k+p\right)  }.
\]
Finally, integrating both sides of the above equation with respect to $x_{p}$
from $-1$ to $0$ and using (\ref{1}) gives the desired equation.
\end{proof}

We note that taking $p=0$ in (\ref{3}) gives \cite[Theorem 1.2]{KELLER}.

Now, we are ready to give the proof of the main theorem.

\begin{proof}
[Proof of Theorem \ref{teo1}]Multiplying both sides of (\ref{3}) with
$\frac{t^{n}}{n!}$ and summing over $n$ from $0$ to infinitive, we have
\begin{align*}
\frac{\left(  -1\right)  ^{p}}{\left(  p+1\right)  !}\sum_{n=0}^{\infty
}B_{n,p}\frac{t^{n}}{n!}  &  =%
%TCIMACRO{\dint \limits_{-1}^{0}}%
%BeginExpansion
{\displaystyle\int\limits_{-1}^{0}}
%EndExpansion%
%TCIMACRO{\dint \limits_{0}^{x_{p}}}%
%BeginExpansion
{\displaystyle\int\limits_{0}^{x_{p}}}
%EndExpansion
\ldots%
%TCIMACRO{\dint \limits_{0}^{x_{3}}}%
%BeginExpansion
{\displaystyle\int\limits_{0}^{x_{3}}}
%EndExpansion
\left[
%TCIMACRO{\dint \limits_{0}^{x_{2}}}%
%BeginExpansion
{\displaystyle\int\limits_{0}^{x_{2}}}
%EndExpansion
\left(  \sum_{n=0}^{\infty}w_{n}\left(  x_{1}\right)  \frac{t^{n}}{n!}\right)
dx_{1}\right]  dx_{2}\ldots dx_{p-1}dx_{p}\\
&  =%
%TCIMACRO{\dint \limits_{-1}^{0}}%
%BeginExpansion
{\displaystyle\int\limits_{-1}^{0}}
%EndExpansion%
%TCIMACRO{\dint \limits_{0}^{x_{p}}}%
%BeginExpansion
{\displaystyle\int\limits_{0}^{x_{p}}}
%EndExpansion
\ldots%
%TCIMACRO{\dint \limits_{0}^{x_{3}}}%
%BeginExpansion
{\displaystyle\int\limits_{0}^{x_{3}}}
%EndExpansion
\left[
%TCIMACRO{\dint \limits_{0}^{x_{2}}}%
%BeginExpansion
{\displaystyle\int\limits_{0}^{x_{2}}}
%EndExpansion
\frac{1}{1-x_{1}\left(  e^{t}-1\right)  }dx_{1}\right]  dx_{2}\ldots
dx_{p-1}dx_{p}.
\end{align*}
If we evaluate the first integral, we obtain
\[%
%TCIMACRO{\dint \limits_{0}^{x_{2}}}%
%BeginExpansion
{\displaystyle\int\limits_{0}^{x_{2}}}
%EndExpansion
\frac{1}{1-x_{1}\left(  e^{t}-1\right)  }dx_{1}=\frac{-1}{e^{t}-1}\ln\left(
1-x_{2}\left(  e^{t}-1\right)  \right)  .
\]
For the second time, we evaluate
\begin{align*}
&  \frac{-1}{e^{t}-1}%
%TCIMACRO{\dint \limits_{0}^{x_{3}}}%
%BeginExpansion
{\displaystyle\int\limits_{0}^{x_{3}}}
%EndExpansion
\ln\left(  1-x_{2}\left(  e^{t}-1\right)  \right)  dx_{2}\\
&  \quad=\frac{1}{\left(  e^{t}-1\right)  ^{2}}\left[  \left(  1-x_{3}\left(
e^{t}-1\right)  \right)  \ln\left(  1-x_{3}\left(  e^{t}-1\right)  \right)
-\left(  1-x_{3}\left(  e^{t}-1\right)  \right)  +1\right]  .
\end{align*}
By induction on $p$, let us assume that the following equation holds%
\begin{align}
&
%TCIMACRO{\dint \limits_{0}^{x_{p}}}%
%BeginExpansion
{\displaystyle\int\limits_{0}^{x_{p}}}
%EndExpansion
\ldots%
%TCIMACRO{\dint \limits_{0}^{x_{3}}}%
%BeginExpansion
{\displaystyle\int\limits_{0}^{x_{3}}}
%EndExpansion%
%TCIMACRO{\dint \limits_{0}^{x_{2}}}%
%BeginExpansion
{\displaystyle\int\limits_{0}^{x_{2}}}
%EndExpansion
w_{n}\left(  x_{1}\right)  dx_{1}dx_{2}\ldots dx_{p-1}\label{12}\\
&  \quad=\frac{\left(  -1\right)  ^{p}}{\left(  e^{t}-1\right)  ^{p}\left(
p-1\right)  !}\left(  1-x_{p}\left(  e^{t}-1\right)  \right)  ^{p-1}\ln\left(
1-x_{p}\left(  e^{t}-1\right)  \right) \nonumber\\
&  \quad\quad\quad-\frac{\left(  -1\right)  ^{p}}{\left(  e^{t}-1\right)
^{p}\left(  p-1\right)  !}\left[  H_{p-1}\left(  1-x_{p}\left(  e^{t}%
-1\right)  \right)  ^{p-1}+H_{p-1}\right] \nonumber\\
&  \quad\quad\quad+\sum_{k=1}^{p-2}\frac{\left(  -1\right)  ^{p-k}%
H_{p-1-k}x_{p}^{k}}{\left(  e^{t}-1\right)  ^{p-k}\left(  p-1-k\right)
!k!}.\nonumber
\end{align}
Now, we want to prove that (\ref{12}) holds for the case $p+1.$ Let us
integrate both sides of (\ref{12}) with respect to $x_{p}$ from $0$ to $y.$
Then we have%
\begin{align*}
&
%TCIMACRO{\dint \limits_{0}^{y}}%
%BeginExpansion
{\displaystyle\int\limits_{0}^{y}}
%EndExpansion%
%TCIMACRO{\dint \limits_{0}^{x_{p}}}%
%BeginExpansion
{\displaystyle\int\limits_{0}^{x_{p}}}
%EndExpansion
\ldots%
%TCIMACRO{\dint \limits_{0}^{x_{3}}}%
%BeginExpansion
{\displaystyle\int\limits_{0}^{x_{3}}}
%EndExpansion%
%TCIMACRO{\dint \limits_{0}^{x_{2}}}%
%BeginExpansion
{\displaystyle\int\limits_{0}^{x_{2}}}
%EndExpansion
w_{n}\left(  x_{1}\right)  dx_{1}dx_{2}\ldots dx_{p-1}dx_{p}\\
&  \quad=\frac{\left(  -1\right)  ^{p}}{\left(  e^{t}-1\right)  ^{p}\left(
p-1\right)  !}%
%TCIMACRO{\dint \limits_{0}^{y}}%
%BeginExpansion
{\displaystyle\int\limits_{0}^{y}}
%EndExpansion
\left(  1-x_{p}\left(  e^{t}-1\right)  \right)  ^{p-1}\ln\left(
1-x_{p}\left(  e^{t}-1\right)  \right)  dx_{p}\\
&  \quad\quad-\frac{\left(  -1\right)  ^{p}H_{p-1}}{\left(  e^{t}-1\right)
^{p}\left(  p-1\right)  !}%
%TCIMACRO{\dint \limits_{0}^{y}}%
%BeginExpansion
{\displaystyle\int\limits_{0}^{y}}
%EndExpansion
\left(  1-x_{p}\left(  e^{t}-1\right)  \right)  ^{p-1}dx_{p}+\frac{\left(
-1\right)  ^{p}H_{p-1}}{\left(  e^{t}-1\right)  ^{p}\left(  p-1\right)  !}%
%TCIMACRO{\dint \limits_{0}^{y}}%
%BeginExpansion
{\displaystyle\int\limits_{0}^{y}}
%EndExpansion
dx_{p}\\
&  \quad\quad+\sum_{k=1}^{p-2}\frac{\left(  -1\right)  ^{p-k}H_{p-1-k}%
}{\left(  e^{t}-1\right)  ^{p-k}\left(  p-1-k\right)  !k!}%
%TCIMACRO{\dint \limits_{0}^{y}}%
%BeginExpansion
{\displaystyle\int\limits_{0}^{y}}
%EndExpansion
x_{p}^{k}dx_{p}.
\end{align*}
The first integral in the right hand-side equals%
\begin{align}
&  \frac{\left(  -1\right)  ^{p}}{\left(  e^{t}-1\right)  ^{p}\left(
p-1\right)  !}%
%TCIMACRO{\dint \limits_{0}^{y}}%
%BeginExpansion
{\displaystyle\int\limits_{0}^{y}}
%EndExpansion
\left(  1-x_{p}\left(  e^{t}-1\right)  \right)  ^{p-1}\ln\left(
1-x_{p}\left(  e^{t}-1\right)  \right)  dx_{p}\label{13}\\
&  \quad=\frac{\left(  -1\right)  ^{p+1}}{\left(  e^{t}-1\right)  ^{p+1}%
p!}\left[  \left(  1-y\left(  e^{t}-1\right)  \right)  ^{p}\ln\left(
1-y\left(  e^{t}-1\right)  \right)  -\frac{\left(  1-y\left(  e^{t}-1\right)
\right)  ^{p}}{p}+\frac{1}{p}\right]  .\nonumber
\end{align}
For the second integral in the right hand-side, we obtain%
\begin{align}
&  \frac{\left(  -1\right)  ^{p}H_{p-1}}{\left(  e^{t}-1\right)  ^{p}\left(
p-1\right)  !}%
%TCIMACRO{\dint \limits_{0}^{y}}%
%BeginExpansion
{\displaystyle\int\limits_{0}^{y}}
%EndExpansion
\left(  1-x_{p}\left(  e^{t}-1\right)  \right)  ^{p-1}dx_{p}\label{14}\\
&  \quad=\frac{\left(  -1\right)  ^{p+1}H_{p-1}}{\left(  e^{t}-1\right)
^{p+1}p!}\left(  1-y\left(  e^{t}-1\right)  \right)  ^{p}-\frac{\left(
-1\right)  ^{p+1}H_{p-1}}{\left(  e^{t}-1\right)  ^{p+1}p!}.\nonumber
\end{align}
For the third and fourth integrals, we find%
\begin{equation}
\frac{\left(  -1\right)  ^{p}H_{p-1}}{\left(  e^{t}-1\right)  ^{p}\left(
p-1\right)  !}%
%TCIMACRO{\dint \limits_{0}^{y}}%
%BeginExpansion
{\displaystyle\int\limits_{0}^{y}}
%EndExpansion
dx_{p}=\frac{\left(  -1\right)  ^{p+1}H_{p-1}}{\left(  e^{t}-1\right)
^{p}\left(  p-1\right)  !}y \label{15}%
\end{equation}
and
\begin{equation}
\sum_{k=1}^{p-2}\frac{\left(  -1\right)  ^{p-k}H_{p-1-k}}{\left(
e^{t}-1\right)  ^{p-k}\left(  p-1-k\right)  !k!}%
%TCIMACRO{\dint \limits_{0}^{y}}%
%BeginExpansion
{\displaystyle\int\limits_{0}^{y}}
%EndExpansion
x_{p}^{k}dx_{p}=\sum_{k=2}^{p-1}\frac{\left(  -1\right)  ^{p+1-k}H_{p-k}y^{k}%
}{\left(  e^{t}-1\right)  ^{p+1-k}\left(  p-k\right)  !k!}, \label{16}%
\end{equation}
respectively. Combining (\ref{13}), (\ref{14}), (\ref{15}) and (\ref{16}), we
achieve that
\begin{align*}
&
%TCIMACRO{\dint \limits_{0}^{y}}%
%BeginExpansion
{\displaystyle\int\limits_{0}^{y}}
%EndExpansion%
%TCIMACRO{\dint \limits_{0}^{x_{p}}}%
%BeginExpansion
{\displaystyle\int\limits_{0}^{x_{p}}}
%EndExpansion
\ldots%
%TCIMACRO{\dint \limits_{0}^{x_{3}}}%
%BeginExpansion
{\displaystyle\int\limits_{0}^{x_{3}}}
%EndExpansion%
%TCIMACRO{\dint \limits_{0}^{x_{2}}}%
%BeginExpansion
{\displaystyle\int\limits_{0}^{x_{2}}}
%EndExpansion
w_{n}\left(  x_{1}\right)  dx_{1}dx_{2}\ldots dx_{p-1}dx_{p}\\
&  \quad=\frac{\left(  -1\right)  ^{p+1}}{\left(  e^{t}-1\right)  ^{p+1}%
p!}\left(  1-y\left(  e^{t}-1\right)  \right)  ^{p}\ln\left(  1-y\left(
e^{t}-1\right)  \right) \\
&  \quad-\frac{\left(  -1\right)  ^{p+1}}{\left(  e^{t}-1\right)  ^{p+1}%
p!}\left[  H_{p}\left(  1-y\left(  e^{t}-1\right)  \right)  ^{p}+H_{p}\right]
\\
&  \quad+\sum_{k=1}^{p-1}\frac{\left(  -1\right)  ^{p+1-k}H_{p-k}y^{k}%
}{\left(  e^{t}-1\right)  ^{p+1-k}\left(  p-k\right)  !k!}.
\end{align*}
Finally, setting $y=-1$ in the above equation and using (\ref{3}), we arrive
at the desired equation.
\end{proof}

As an application of Theorem \ref{teo1}, we give the following theorem.

\begin{theorem}
\label{teo2}For $n>p\geq0$, we have%
\begin{equation}
\sum_{k=p+1}^{n}\binom{n}{k}%
%TCIMACRO{\QATOPD{\{}{\}}{k}{p+1}}%
%BeginExpansion
\genfrac{\{}{\}}{0pt}{}{k}{p+1}%
%EndExpansion
B_{n-k,p}=\frac{p^{n-1}\left(  n-pH_{p}\right)  }{p!}+\sum_{j=1}^{p}%
%TCIMACRO{\QATOPD{\{}{\}}{n}{p-j}}%
%BeginExpansion
\genfrac{\{}{\}}{0pt}{}{n}{p-j}%
%EndExpansion
\frac{H_{j}}{j!}. \label{9}%
\end{equation}

\end{theorem}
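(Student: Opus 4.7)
The plan is to read off Theorem \ref{teo2} as an identity of coefficients in the generating function of Theorem \ref{teo1}. Clearing the denominator $(e^{t}-1)^{p+1}$ in (\ref{5}) should turn the left-hand side into a product of two exponential generating series whose Cauchy product manifestly produces the sum in (\ref{9}), while the right-hand side becomes a combination of $e^{pt}$ and powers $(e^{t}-1)^{p-k}$, both of which have known expansions.

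Concretely, I would multiply both sides of (\ref{5}) by $(e^{t}-1)^{p+1}$ to obtain
\[
(e^{t}-1)^{p+1}\sum_{n=0}^{\infty}B_{n,p}\frac{t^{n}}{n!}=(p+1)(t-H_{p})e^{pt}+(p+1)\sum_{k=1}^{p}\binom{p}{k}H_{k}(e^{t}-1)^{p-k}.
\]
Using (\ref{6}), I would rewrite $(e^{t}-1)^{p+1}=(p+1)!\sum_{m\geq 0}\genfrac{\{}{\}}{0pt}{}{m}{p+1}\tfrac{t^{m}}{m!}$; then the Cauchy product shows that the coefficient of $t^{n}/n!$ on the left equals $(p+1)!\sum_{k=0}^{n}\binom{n}{k}\genfrac{\{}{\}}{0pt}{}{k}{p+1}B_{n-k,p}$, and the lower limit can be shifted to $k=p+1$ because $\genfrac{\{}{\}}{0pt}{}{k}{p+1}=0$ for $k<p+1$.

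For the right-hand side I would handle the two pieces separately. The term $(p+1)(t-H_{p})e^{pt}$ expands as $(p+1)\sum_{n\geq 0}(np^{n-1}-H_{p}p^{n})\tfrac{t^{n}}{n!}$, contributing $(p+1)p^{n-1}(n-pH_{p})$ to the coefficient of $t^{n}/n!$ (the hypothesis $n>p\geq 0$ in particular gives $n\geq 1$, so $p^{n-1}$ is unambiguous). For the summed term I would again invoke (\ref{6}), writing $(e^{t}-1)^{p-k}=(p-k)!\sum_{n\geq 0}\genfrac{\{}{\}}{0pt}{}{n}{p-k}\tfrac{t^{n}}{n!}$ and using $\binom{p}{k}(p-k)!=p!/k!$ to combine constants into $(p+1)!/k!$. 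Reindexing $k\mapsto j$ then gives $(p+1)!\sum_{j=1}^{p}\frac{H_{j}}{j!}\genfrac{\{}{\}}{0pt}{}{n}{p-j}$.

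Equating the two expressions for the coefficient of $t^{n}/n!$ and dividing by $(p+1)!$ yields (\ref{9}) at once. The only delicate point is keeping the bookkeeping straight: checking that the lower summation index on the left really becomes $p+1$ (from $\genfrac{\{}{\}}{0pt}{}{k}{p+1}=0$ for $k\le p$), verifying that the constant term $H_{p}p^{n}$ and the upper endpoint $j=p$ of the Stirling sum combine to match the right-hand side of (\ref{9}) without an off-by-one error, and confirming that nothing blows up at $n=0$ (which the hypothesis $n>p$ excludes anyway). No estimation or convergence issue arises because everything is carried out at the level of formal power series in $t$.
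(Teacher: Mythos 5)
Your proposal is correct and follows essentially the same route as the paper: multiply (\ref{5}) by $(e^{t}-1)^{p+1}$ (the paper uses $(e^{t}-1)^{p+1}/(p+1)!$, a purely cosmetic difference), expand the left side as a Cauchy product via (\ref{6}), expand $e^{pt}$ and $(e^{t}-1)^{p-k}$ on the right, and compare coefficients of $t^{n}/n!$. Your explicit remarks about shifting the lower summation index to $k=p+1$ and about $n\geq 1$ are sound and, if anything, slightly more careful than the paper's presentation.
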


\begin{proof}
Multiplying both sides of (\ref{5}) with $\frac{\left(  e^{t}-1\right)
^{p+1}}{\left(  p+1\right)  !}$ and using (\ref{6}), the left hand side of
(\ref{5}) becomes
\begin{align}
\frac{\left(  e^{t}-1\right)  ^{p+1}}{\left(  p+1\right)  !}\sum_{n=0}%
^{\infty}B_{n,p}\frac{t^{n}}{n!}  &  =\sum_{n=0}^{\infty}\sum_{k=0}^{\infty}%
%TCIMACRO{\QATOPD{\{}{\}}{k}{p+1}}%
%BeginExpansion
\genfrac{\{}{\}}{0pt}{}{k}{p+1}%
%EndExpansion
\frac{B_{n,p}}{k!n!}t^{n+k}\label{8}\\
&  =\sum_{n=0}^{\infty}\left[  \sum_{k=0}^{n}\binom{n}{k}%
%TCIMACRO{\QATOPD{\{}{\}}{k}{p+1}}%
%BeginExpansion
\genfrac{\{}{\}}{0pt}{}{k}{p+1}%
%EndExpansion
B_{n-k,p}\right]  \frac{t^{n}}{n!}.\nonumber
\end{align}
For the right hand side of (\ref{5}), we obtain%
\begin{align}
&  \frac{te^{pt}}{p!}-\frac{H_{p}e^{pt}}{p!}+\sum_{k=1}^{p}\frac{H_{k}}%
{k!}\frac{\left(  e^{t}-1\right)  ^{p-k}}{\left(  p-k\right)  !}\label{7}\\
&  \qquad\qquad=\sum_{n=1}^{\infty}\left[  \frac{p^{n-1}\left(  n-pH_{p}%
\right)  }{p!}+\sum_{k=1}^{p}\frac{H_{k}}{k!}%
%TCIMACRO{\QATOPD{\{}{\}}{n}{p-k}}%
%BeginExpansion
\genfrac{\{}{\}}{0pt}{}{n}{p-k}%
%EndExpansion
\right]  \frac{t^{n}}{n!}.\nonumber
\end{align}
Finally, comparing the coefficients of $\frac{t^{n}}{n!}$ in (\ref{8}) and
(\ref{7}) completes the proof.
\end{proof}

Using (\ref{11}) in Theorem \ref{teo2} gives the following corollary.

\begin{corollary}
\label{cor1}For $n>p\geq0$,%
\[
\sum_{k=p+1}^{n}\sum_{j=0}^{p}\binom{n}{k}%
%TCIMACRO{\QATOPD{\{}{\}}{k}{p+1}}%
%BeginExpansion
\genfrac{\{}{\}}{0pt}{}{k}{p+1}%
%EndExpansion%
%TCIMACRO{\QATOPD{[}{]}{p}{j}}%
%BeginExpansion
\genfrac{[}{]}{0pt}{}{p}{j}%
%EndExpansion
\left(  -1\right)  ^{j}B_{n+j-k}=\frac{p^{n-1}\left(  n-pH_{p}\right)  }%
{p+1}+\frac{p!}{p+1}\sum_{j=1}^{p}%
%TCIMACRO{\QATOPD{\{}{\}}{n}{p-j}}%
%BeginExpansion
\genfrac{\{}{\}}{0pt}{}{n}{p-j}%
%EndExpansion
\frac{H_{j}}{j!}.
\]

\end{corollary}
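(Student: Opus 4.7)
The plan is very short: this corollary is stated as a direct consequence of Theorem \ref{teo2} via the relation (\ref{11}), so the proof is purely a substitution plus some bookkeeping.

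First I would start from the identity established in Theorem \ref{teo2},
\[
\sum_{k=p+1}^{n}\binom{n}{k}\genfrac{\{}{\}}{0pt}{}{k}{p+1}B_{n-k,p}=\frac{p^{n-1}(n-pH_{p})}{p!}+\sum_{j=1}^{p}\genfrac{\{}{\}}{0pt}{}{n}{p-j}\frac{H_{j}}{j!},
\]
and expand each $B_{n-k,p}$ on the left hand side using (\ref{11}) with $n$ replaced by $n-k$:
\[
B_{n-k,p}=\frac{p+1}{p!}\sum_{j=0}^{p}(-1)^{j}\genfrac{[}{]}{0pt}{}{p}{j}B_{n-k+j}.
\]
Substituting this into the left hand side and pulling the common factor $\frac{p+1}{p!}$ out of the double sum gives
\[
\frac{p+1}{p!}\sum_{k=p+1}^{n}\sum_{j=0}^{p}\binom{n}{k}\genfrac{\{}{\}}{0pt}{}{k}{p+1}\genfrac{[}{]}{0pt}{}{p}{j}(-1)^{j}B_{n+j-k}.
\]

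Then I would clear the prefactor by multiplying both sides of the resulting identity by $\frac{p!}{p+1}$. On the right hand side this turns $\frac{p^{n-1}(n-pH_{p})}{p!}$ into $\frac{p^{n-1}(n-pH_{p})}{p+1}$ and produces the factor $\frac{p!}{p+1}$ in front of the harmonic sum, which matches the statement exactly. No convergence, index-shift, or exchange-of-summation subtleties arise, since the double sum is finite and the substitution is pointwise.

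There is essentially no obstacle here; the only thing to check is the index ranges (the inner sum on $j$ runs over $0\le j\le p$ and the outer one on $k$ runs over $p+1\le k\le n$, exactly as in the corollary), so the substitution is legitimate and the proof is immediate.
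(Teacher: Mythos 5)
Your proposal is correct and is exactly the argument the paper intends: the corollary is stated as an immediate consequence of Theorem \ref{teo2} obtained by substituting (\ref{11}) (with $n$ replaced by $n-k$) into the left-hand side and multiplying through by $\frac{p!}{p+1}$. The index bookkeeping you check is the only content of the step, and it matches the statement.
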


As a consequences of Corollary \ref{cor1}, the following sums are obtained
$\left(  p=1,2\right)  :$%

\begin{align*}
\sum_{k=2}^{n}\binom{n}{k}%
%TCIMACRO{\QATOPD{\{}{\}}{k}{2}}%
%BeginExpansion
\genfrac{\{}{\}}{0pt}{}{k}{2}%
%EndExpansion
B_{n+1-k}  &  =\frac{-\left(  n-1\right)  }{2},\\
\sum_{k=3}^{n}\binom{n}{k}%
%TCIMACRO{\QATOPD{\{}{\}}{k}{3}}%
%BeginExpansion
\genfrac{\{}{\}}{0pt}{}{k}{3}%
%EndExpansion
\left(  B_{n+2-k}-B_{n+1-k}\right)   &  =\frac{2^{n-1}\left(  n-3\right)
+2}{3}.
\end{align*}

Setting $n=p+1$ in Theorem \ref{teo2} and using $B_{0,p}=1$, we arrive at the
following corollary.

\begin{corollary}
For $p\geq1$, we obtain a new closed formula for the finite summation of
harmonic numbers%
\[
\sum_{j=1}^{p}%
%TCIMACRO{\QATOPD{\{}{\}}{p+1}{p-j}}%
%BeginExpansion
\genfrac{\{}{\}}{0pt}{}{p+1}{p-j}%
%EndExpansion
\frac{H_{j}}{j!}=\frac{p!-\left(  p+1\right)  p^{p}+p^{p+1}H_{p}}{p!}.
\]

\end{corollary}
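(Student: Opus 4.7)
The plan is to simply substitute $n=p+1$ into the identity (\ref{9}) of Theorem \ref{teo2} and observe a drastic collapse of the left-hand sum. Since the summation index $k$ runs from $p+1$ to $n=p+1$, only the single term $k=p+1$ survives. That term carries the factors $\binom{p+1}{p+1}=1$, $\genfrac{\{}{\}}{0pt}{}{p+1}{p+1}=1$, and $B_{p+1-k,p}=B_{0,p}=1$, so the entire left-hand side reduces to $1$.

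Next I would evaluate the right-hand side at $n=p+1$: the first piece becomes $\dfrac{p^{p}\bigl((p+1)-pH_{p}\bigr)}{p!}$, while the harmonic sum becomes $\displaystyle\sum_{j=1}^{p}\genfrac{\{}{\}}{0pt}{}{p+1}{p-j}\dfrac{H_{j}}{j!}$. Setting the two sides equal and solving for the harmonic sum gives
\[
\sum_{j=1}^{p}\genfrac{\{}{\}}{0pt}{}{p+1}{p-j}\frac{H_{j}}{j!}=1-\frac{p^{p}\bigl((p+1)-pH_{p}\bigr)}{p!}=\frac{p!-(p+1)p^{p}+p^{p+1}H_{p}}{p!},
\]
which is exactly the asserted formula.

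There is no substantive obstacle here; the only thing to be careful about is the bookkeeping of signs and the product $p^{p}\cdot pH_{p}=p^{p+1}H_{p}$ when clearing denominators. The argument is purely a specialization of Theorem \ref{teo2}, so the proof amounts to the two lines above.
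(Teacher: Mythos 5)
Your proposal is correct and follows exactly the paper's own route: the corollary is obtained by setting $n=p+1$ in Theorem \ref{teo2}, noting that the left-hand sum collapses to the single term $k=p+1$ with value $B_{0,p}=1$, and solving for the harmonic sum. The algebra checks out, so nothing further is needed.
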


\end{document}